\theoremstyle{plain}
\newtheorem{thm}{Theorem}[section]
\newtheorem{theorem}[thm]{Theorem}
\newtheorem{cor}[thm]{Corollary}
\newtheorem{lemma}[thm]{Lemma}
\theoremstyle{definition}
\newtheorem{defn}[thm]{Definition}
\newtheorem{definition}[thm]{Definition}
\theoremstyle{remark}
\newtheorem{remark}[thm]{Remark}
\newcommand{\myabs}[1]{\vert#1\vert}
\newcommand{\ld}{\lambda}
\newcommand{\R}{\mathbb{R}}
\newcommand{\wtu}{\widetilde{U}}
\newcommand{\wtn}{\widetilde{N}}
\newcommand{\orb}{\mathcal O}
\newcommand{\cc}{(\widetilde{U}, G_U, \pi_U)}
\DeclareMathOperator{\iso}{Iso}
\DeclareMathOperator{\fix}{Fix}
\DeclareMathOperator{\vol}{vol}
\begin{document}

\title{You can hear the local orientability of an orbifold}

\author[S.Richardson]{Sean Richardson}
\address{Sean Richardson \\ Lewis and Clark College, Department of
Mathematical Sciences, 0615 SW Palatine Hill Road, MSC 110, Portland, OR 97219}
\email{srichardson@lclark.edu}
\author[E. Stanhope]{Elizabeth Stanhope}
\address{Elizabeth Stanhope \\ Lewis and Clark College, Department of
Mathematical Sciences, 0615 SW Palatine Hill Road, MSC 110, Portland, OR 97219}
\email{stanhope@lclark.edu}
\thanks{{\it Keywords:} Spectral geometry \ Global Riemannian
  geometry \ Orbifolds} 

\maketitle

\begin{abstract}
A Riemannian orbifold is a mildly singular generalization of a Riemannian manifold which is locally modeled on the quotient of a connected, open manifold under a finite group of isometries. If all of the isometries used to define the local structures of an entire orbifold are orientation preserving, we call the orbifold \emph{locally orientable}. We use heat invariants to show that a Riemannian orbifold which is locally orientable cannot be Laplace isospectral to a Riemannian orbifold which is not locally orientable.  As a corollary we observe that a Riemannian orbifold that is not locally orientable cannot be Laplace isospectral to a Riemannian manifold.
\end{abstract}

\medskip

\noindent 
\begin{center}
\begin{small}
2000 {\it Mathematics Subject Classification:}
Primary 58J53; Secondary 53C20
\end{small}
\end{center}
\bigskip

\section{Introduction}

A Riemannian orbifold is a mildly singular generalization of a Riemannian manifold first introduced by I. Satake \cite{Satake56} in 1956 and later popularized by W. Thurston \cite{Th}.  The study of the spectral geometry of orbifolds was initiated by Y-J. Chiang \cite{MR1089240} who established the existence of the Laplace spectrum and heat kernel of a compact Riemannian orbifold. Results from the spectral geometry of manifolds have been extended to orbifolds, such as C. Farsi's \cite{MR1821378} proof of the Weyl law for orbifolds showing that the spectrum determines the dimension and volume of an orbifold.    E. Dryden, C. Gordon, S. Greenwald, and D. Webb \cite{dggw,MR3619744} established the asymptotic expansion of the heat trace for orbifolds, noting that the singular structure of an orbifold contributes additional terms to those familiar from the manifold setting.  The spectrum does not determine an orbifold's  singular structure however,  for example J.P. Rossetti, D. Schueth, M. Weilandt \cite{MR2447904} have shown that isospectral orbifolds can have maximal isotropy of different orders, and N. Shams, D. Webb and second author \cite{MR2263484} showed that isospectral orbifolds can have topologically distinct singular sets.

The local structure of an $n$-dimensional orbifold is that of the quotient of an open set in $\R^n$ by a finite group of diffeomorphisms.  More precisely, an orbifold coordinate chart over a neighborhood $U$ in an orbifold is a triple $\cc$ for which $\wtu$ is a connected open subset of $\R^n$,  $G_U$ is a finite group of diffeomorphisms acting on $\wtu$, and $\wtu/G_U$ is homeomorphic to $U$. If the local group $G_U$ of an orifold coordinate chart acts only by orientation-preserving transformations, we say that this chart is \emph{orientable}.  An orbifold is called \emph{locally orientable} if all of its coordinate charts are orientable.  Note that being locally orientable does not imply that an orbifold is orientable in the standard sense.  For example a Klein bottle is an orbifold with trivial, hence orientable, orbifold charts but is globally non-orientable. In this note we use heat trace methods to show that \emph{one can hear the local orientability of an orbifold}. That is, an orbifold that possesses at least one coordinate chart that is not orientable cannot be isospectral to an orbifold with all orientable coordinate charts. 

It is not known whether or not a Riemannian orbifold with nonempty singular set can be isospectral to a manifold.  Our result implies that an orientation reversing element in a coordinate chart is an obstruction to isospectrality to a manifold. This observation is equivalent to that made by E. Dryden, C. Gordon, S. Greenwald, and D. Webb in \cite[Theorem 5.1]{dggw}. The question of detecting the orientability, in the standard sense, of a manifold or orbifold from its Laplace spectrum is still unresolved in the closed setting. However P. B\'erard and D. Webb \cite{MR1322332} constructed a pair of isospectral flat surfaces with boundary of which one is orientable while the other is not.

Henceforth we assume all orbifolds are closed, compact and connected, unless otherwise stated.

\medskip

\noindent\emph{Acknowledgements.} This work was supported in part by the John S. Rogers Science Research Program at Lewis \& Clark College.  The second author also thanks Bucknell University for its hospitality during the completion of the manuscript. We also thank the reviewer for their helpful suggestions.

\section{Riemannian orbifolds and their Laplace spectra}

In this section we follow \cite{gordon12} by C. Gordon,  and \cite{dggw} by E. Dryden, C. Gordon, S. Greenwald, and D. Webb as we recall the definition and basic properties of a Riemannian orbifold, and the asymptotic expansion of the heat trace of an orbifold, respectively. 

\begin{definition} \label{defn:ofld}
Let $\orb$ be a second countable Hausdorff space, and let $U$ be a connected open subset of $\orb$.   
 \begin{enumerate}
\item[a.] An $n$-dimensional \emph{orbifold coordinate chart} over $U$ is a
    triple $\cc$ for which: $\wtu$ is a connected open subset of $\R^n$,
   $G_U$ is a finite group acting effectively on
    $\wtu$ by diffeomorphisms, and the mapping $\pi_U$
    from $\wtu$ onto $U$ induces a homeomorphism from the orbit space
    $\wtu/G_U$ onto $U$.
\item [b.] An \emph{orbifold atlas} is a collection of compatible orbifold
    charts $\cc$ such that the images $\pi_U({\wtu})$ cover $\orb$.  An \emph{orbifold} is a second countable Hausdorff space together with an orbifold atlas.  
\item[c.] Suppose $p\in U\subset \orb$ and $\cc$ is an orbifold chart over
    $U$.  The \emph{isotropy type} of $p$ is the isomorphism class of the
isotropy group of a lift $\tilde p\in\pi_U^{-1}(p)$ of $p$ under the action of
   $G_U$.  The isotropy type of $p$ is independent of the choice of lift $\tilde p$ as well as the choice of orbifold chart. Note that the isotropy type of $p$ can be canonically identified with a conjugacy class of subgroups of $O(n)$.  See \cite[Section 1.2]{gordon12} for more about this.
\item[d.] Points in $\orb$ with nontrivial isotropy are called \emph{singular points}.  Points that are not singular are called \emph{regular points}. 
\item[e.] A Riemannian structure on an orbifold is defined by giving the local cover $\wtu$ of each orbifold chart $\cc$ a $G_U$-invariant Riemannian metric so that the maps involved in the compatibility condition are isometries.  An orbifold with a Riemannian structure will be called a \emph{Riemannian orbifold}.  
\end{enumerate}
\end{definition}

An orbifold $\orb$ possesses a stratification given by its singular structure.  In particular, define an equivalence relation on $\orb$ by $p$ is \emph{isotropy equivalent} to $q$ if and only if both points have the same isotropy type. The connected components of isotropy equivalent sets of points, called \emph{$\orb$-strata}, form the leaves of the stratification. From \cite[Theorem 1.24]{gordon12} and \cite[Proposition 2.13]{dggw} we have the following properties of this stratification.  Note that a \emph{smooth stratification} of an orbifold or manifold is a locally finite partition of that orbifold or manifold into locally closed submanifolds.  

\begin{theorem}\label{stratification} Let $\orb$ be an orbifold (not necessarily compact nor connected) and $\cc$ an orbifold coordinate chart in $\orb$, then
\begin{itemize}
\item[a.]  The $\orb$-strata form a smooth stratification of $\orb$.  
\item[b.] The closure of an $\orb$-stratum $N$ is made up of the union of $N$ with a collection of lower-dimensional strata.  
\item[c.] If $\orb$ is compact, the stratification of $\orb$ is finite.
\item[d.] If $\orb$ is connected, then the set of all regular points of $\orb$ form a single stratum which is open in $\orb$ and has full dimension.
\item[e.] The action of $G_U$ on $\wtu$ gives smooth stratifications of both $\wtu$ and $U$. Strata in $\wtu$ are connected components of isotropy equivalent sets of points. Strata in $U$ are connected components of the intersection of the $\orb$-strata with $U$.  
\item[f.] Any two points in the same stratum of $\wtu$ (as defined in (e)) have the same isotropy subgroups in $G_U$. 
\item[g.] For $H$ a subgroup of $G_U$, each connected component $W$ of the fixed point set of $H$ in $\wtu$ is a closed submanifold of $\wtu$.  If a stratum arising from the $G_U$-action on $\wtu$ intersects $W$ nontrivially, that stratum must lie entirely within $W$. Thus the stratification of $\wtu$ restricts to a stratification of $W$.
\end{itemize}
\end{theorem}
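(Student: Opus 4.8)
The plan is to establish all seven statements inside a single orbifold chart $\cc$ first and then globalize using the compatibility of charts. So fix such a chart and average an arbitrary metric over the finite group $G_U$ to make the $G_U$-action on $\wtu$ an action by isometries. The crucial reduction is to a linear model: by Bochner's linearization theorem (equivalently, via the exponential map of the invariant metric), for $\tilde p \in \wtu$ with isotropy $H := (G_U)_{\tilde p}$ there is an $H$-equivariant diffeomorphism from an $H$-invariant neighborhood of the origin in $T_{\tilde p}\wtu$, carrying the linear isotropy action, onto a neighborhood of $\tilde p$; more globally the slice theorem produces a $G_U$-invariant tube around the orbit $G_U \tilde p$ that is equivariantly diffeomorphic to $G_U \times_H S$ for an $H$-invariant disk slice $S$. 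Hence near any point the isotropy, fixed-point, and stratum structure of $\wtu$ is modeled on that of a single linear representation of a finite group, where everything is explicit.

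Two of the items are purely local and come straight from this model. For $H \le G_U$, since $G_U$ acts by isometries, $\fix(H) = \bigcap_{h \in H}\fix(h)$ is a closed totally geodesic submanifold of $\wtu$, and so is each connected component $W$ of it; that is the first half of (g). In the linear model near a point, the strata of $\wtu$ are exactly the loci $\{x : (G_U)_x = K\}$ as $K$ ranges over subgroups, each equal to the fixed subspace of $K$ with the fixed subspaces of all strictly larger subgroups deleted --- in particular a locally closed submanifold whose points, the locus being connected, all share the isotropy subgroup $K$; this is (f). If such a stratum meets $W$, then $H \le K$, so the stratum lies in $\fix(K) \subseteq \fix(H)$ and, being connected, in $W$, which finishes (g).

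The remaining items follow by combining the local model with the atlas. Because $G_U$ has only finitely many subgroups, the stratification of $\wtu$ is locally finite with locally closed submanifold strata, hence smooth; pushing forward by the open map $\pi_U$ gives the stratification of $U$, the compatibility maps between charts are isometries and hence stratum-preserving, so these local stratifications patch into a smooth stratification of $\orb$ whose leaves --- the $\orb$-strata --- restrict on each $U$ to the connected components of the intersections of $\orb$-strata with $U$, and lift on each $\wtu$ to connected components of isotropy-equivalence classes; this gives (a) and (e). In the linear model the closure of $\{x : (G_U)_x = K\}$ only adds points whose isotropy strictly contains $K$, and such points lie in strictly smaller fixed subspaces, hence in strictly lower-dimensional strata, which gives (b); and a compact orbifold is a finite union of charts each with finitely many strata, which gives (c). For (d), the regular set is the trivial-isotropy stratum; in $\wtu$ its complement is the finite union $\bigcup_{e \ne g \in G_U}\fix(g)$ of proper closed totally geodesic submanifolds, so it is open and dense, and these properties descend to $U$ and to $\orb$; connectedness then follows because near a codimension-one stratum $\orb$ looks locally like a half-space with that stratum as its boundary while every other singular stratum has codimension at least two, so in the connected space $\orb$ any path can be homotoped into the regular set.

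I expect the main obstacle to be the passage from the local linear pictures to a single coherent global stratification of $\orb$: reconciling ``connected component of an isotropy-type class of $\orb$'' with ``connected component of a fixed-subgroup locus in a cover $\wtu$'' and checking that the chart-compatibility isometries respect all of this, together with the codimension-one (reflection) case in the connectedness argument for (d).
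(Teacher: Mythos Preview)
The paper does not prove this theorem; it is stated as background with the proof deferred entirely to \cite[Theorem~1.24]{gordon12} and \cite[Proposition~2.13]{dggw}. So there is no in-paper argument to compare against---your attempt supplies a proof where the paper supplies only a citation.

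Your outline is the standard one and is essentially correct: linearize the $G_U$-action via an invariant metric and the slice/Bochner theorem, read off (f) and (g) from the linear isotropy representation, and globalize using chart compatibility. The sketches for (a)--(c), (e), and (g) are fine as written.

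The one place that needs a sharper step is exactly the one you flagged. For (f), the $\wtu$-strata are by definition connected components of isotropy-\emph{type} classes (i.e.\ of sets where the isotropy lies in a fixed $G_U$-conjugacy class), and the content of (f) is that on each such component the isotropy \emph{subgroup} itself---not merely its conjugacy class---is constant. Your sentence ``the strata of $\wtu$ are exactly the loci $\{x:(G_U)_x=K\}$'' asserts this rather than deriving it. The missing step is short: if $\tilde p$ has isotropy $H$, the slice theorem gives a neighborhood in which every isotropy group is a subgroup of $H$; a nearby point in the same stratum has isotropy conjugate to $H$, hence of order $|H|$, hence equal to $H$. Thus the isotropy subgroup is locally constant along the stratum and therefore constant on it. With that in place, and with the half-space local model you already invoke for the codimension-one (reflection) case of (d), your argument goes through.
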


\begin{remark}  In Theorem~\ref{stratification}(e) above, the strata in $\wtu$ are called \emph{$\wtu$-strata} and the strata in $U$ are called \emph{$U$-strata}.
\end{remark}

The tools of spectral geometry transfer to the setting of Riemannian
orbifolds using the local structure of these spaces. For example, given
$f\in C^\infty(\orb)$, $p\in \orb$, and $\cc$ a coordinate chart about $p$,
we compute $\Delta f(p)$ by taking the Laplacian of
$\pi_U^*(f)$ at $\tilde{p} \in\pi_U^{-1}(p)$.  As in the manifold setting, the eigenvalue spectrum of the Laplace operator of a Riemannian orbifold is a sequence
$$
0 = \ld_0 \le \ld_1 \le \ld_2 \le\dots \uparrow +\infty
$$
where each eigenvalue has finite multiplicity.  We say that two orbifolds are \emph{isospectral} if their Laplace spectra agree.

\section{Heat trace asymptotics for Riemannian orbifolds}

As in the manifold setting, an important tool in studying the spectral properties of a Riemannian orbifold $\orb$ is the heat kernel of $\orb$ given by
\[K(t,x,y)=\sum_{j=0}^\infty e^{-\lambda_j t}\varphi_j(x)\varphi_j(y)\]
where $K:(0,\infty) \times \orb \times \orb \rightarrow \R$, and $\{\varphi_j\}_{j=1}^\infty$ forms an orthornormal basis of eigenfunctions of $L^2(\orb)$.  The heat trace of $\orb$ is the following function, obtained by integrating $K(t,x,x)$ over $\orb$,
\[Z(t)= \sum_{j=0}^\infty e^{-\lambda_j t}.\]
The asymptotic behavior of $Z(t)$ as $t \rightarrow 0^+$ yields invariants called the \emph{heat invariants}, obtained by H. Donnelly \cite{MR0420743} for good orbifolds and later by E. Dryden, C. Gordon, S. Greenwald, and D. Webb \cite{dggw}  for general orbifolds.  Essential to this note is the observation that isospectral orbifolds have identical heat invariants.

To state the asymptotics of the heat trace of a Riemannian orbifold precisely we will
need the following terms from~\cite{dggw}.

\begin{defn} Let $\orb$ be an orbifold
\label{def:formulas}
\begin{itemize}
\item[a.] For $a_k$ the usual heat invariants from the manifold setting, let $$I_0=(4\pi t)^{-\dim(\orb)/2} \sum_{k=0}^\infty a_k t^k.$$ 
\item[b.] Let  $(\widetilde{U}, G_U, \pi_U)$ be an orbifold coordinate chart in $\orb$ and $\widetilde{N}$ a $\wtu$-stratum in $\wtu$. By Theorem~\ref{stratification}, all points in $\widetilde{N}$ have the same isotropy group.  This group will be denoted $\iso(\widetilde{N})$.  Define $\iso^{\max}(\widetilde{N})$ as the set of all $\gamma \in \iso(\widetilde{N})$ for which $\widetilde{N}$ is open in $\fix(\gamma)$, where $\fix(\gamma)$ denotes the set of points in $\wtu$ fixed by $\gamma$.  When $\iso^{\max}(\widetilde{N})$ is non-empty, $\widetilde{N}$ is called a \emph{primary} singular stratrum of $\widetilde{U}$.
\item[c.] Let $N$ be an $\orb$-stratum and $x\in N$.  Take $(\widetilde{U}, G_U, \pi_U)$ be an orbifold coordinate chart about $x$, $\tilde x \in \pi_U^{-1}(x)$, and let $\widetilde{N}$ be the $\widetilde{U}$-stratum through $\tilde x$.  Define
 \[b_k(N,x) = \sum_{\gamma \in \iso^{\max}(\widetilde{N})} b_k(\gamma,\tilde{x}).\]
 The function $b_k(\gamma,\tilde{x})$ is defined in \cite[Section 4.2]{dggw}.
\item[d.] For an $\orb$-stratum $N$, $$I_N=(4\pi t)^{-\dim(N)/2}\sum_{k=0}^\infty t^k \int_N b_k(N,x) d\vol_N(x).$$
\end{itemize}
\end{defn}

With this notation in place, we recall the asymptotic behavior of the heat trace of a Riemannian orbifold as $t\rightarrow 0^+$.

\begin{thm}\cite[Theorem 4.8]{dggw} \label{hta} Let $\orb$ be a Riemannian orbifold and let $0 = \ld_0 \le \lambda_1 \le \lambda_2 \le \dots $ be the spectrum of the associated Laplacian acting on smooth functions on $\orb$. The heat trace $\sum_{j=1}^{\infty}e^{-\lambda_{j} t}$ of $\orb$ is asymptotic as $t \rightarrow 0^+$ to
\begin{align}\label{expansion}
I_0+\sum_{N \in S(\mathcal{O})}\frac{I_N}{\myabs{\iso(N)}}
\end{align}
where $S(\orb)$ is the set of all singular $\orb$-strata and where
$\myabs{\iso(N)}$ is the order of the isotropy at each $p \in N$.
Notice this asymptotic expansion is of the form 
\[(4\pi t)^{-\dim{\orb}/2} \sum_{j=0}^\infty c_j t^{\tfrac{j}{2}}\]
for some constants $c_j$.
\end{thm}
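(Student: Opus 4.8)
The plan is to obtain the expansion by localizing the heat kernel in orbifold charts and running a Laplace-type asymptotic analysis near each singular stratum, following the template of H.~Donnelly's computation for good orbifolds \cite{MR0420743}. First I would fix, using compactness and Theorem~\ref{stratification}(c), a finite orbifold atlas $\{(\widetilde U_i, G_i, \pi_i)\}$ together with a subordinate smooth partition of unity $\{\rho_i\}$ on $\orb$. On each chart the orbifold heat kernel arises from the heat kernel $\widetilde K_i$ of the local cover $\widetilde U_i$ (equipped with its $G_i$-invariant metric) by the method of images, $K_\orb(t,\pi_i\tilde x,\pi_i\tilde y)=\sum_{\gamma\in G_i}\widetilde K_i(t,\tilde x,\gamma\tilde y)$, and pushing $\int_\orb\rho_i(x)K_\orb(t,x,x)\,d\vol(x)$ up to $\widetilde U_i$ introduces the generic-fiber factor $1/\myabs{G_i}$. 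Summing over $i$, the heat trace becomes
\[
Z(t)=\sum_i\frac{1}{\myabs{G_i}}\sum_{\gamma\in G_i}\int_{\widetilde U_i}(\pi_i^{*}\rho_i)(\tilde x)\,\widetilde K_i(t,\tilde x,\gamma\tilde x)\,d\vol(\tilde x),
\]
reducing the whole problem to the small-$t$ behavior of each off-diagonal integral $\int(\pi_i^{*}\rho_i)\,\widetilde K_i(t,\tilde x,\gamma\tilde x)\,d\vol$.

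Next I would separate the contribution of $\gamma=\Id$ from the rest. The identity terms reassemble, via the interior Minakshisundaram--Pleijel expansion of $\widetilde K_i(t,\tilde x,\tilde x)$ and $\sum_i\rho_i=1$, into the manifold-type term $I_0=(4\pi t)^{-n/2}\sum_k a_k t^k$ with $n=\dim\orb$. For $\gamma\neq\Id$ the Gaussian factor $e^{-d(\tilde x,\gamma\tilde x)^{2}/4t}$ in the off-diagonal heat kernel is exponentially small wherever $d(\tilde x,\gamma\tilde x)>0$, so the integral is $O(t^{\infty})$ outside any fixed tubular neighborhood of the fixed-point set $\fix(\gamma)$ and concentrates there as $t\to0^{+}$. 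By Theorem~\ref{stratification}(g), $\fix(\gamma)$ is a disjoint union of closed submanifolds of $\widetilde U_i$; on a tubular neighborhood of one such component I would introduce Fermi coordinates along it, linearize $\gamma$ in the normal directions, insert the local parametrix for $\widetilde K_i$, and perform the transverse Gaussian integrals. This produces a factor $(4\pi t)^{-\dim\fix(\gamma)/2}$ times a power series in $t$ whose coefficients are exactly the local data packaged as $b_k(\gamma,\tilde x)$ in Definition~\ref{def:formulas}(c).

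The third and most delicate step is to reorganize $\sum_{\gamma\neq\Id}(\text{tube integral})$ into the stratum sum $\sum_{N\in S(\orb)}I_N/\myabs{\iso(N)}$. I would decompose each $\fix(\gamma)$ into $\widetilde U_i$-strata (Theorem~\ref{stratification}(e),(g)); on each such stratum the isotropy is constant (Theorem~\ref{stratification}(f)), and the $\gamma$'s whose fixed set contains a given $\widetilde U_i$-stratum $\widetilde N$ as an open subset — that is, $\gamma\in\iso^{\max}(\widetilde N)$ — are precisely those contributing at the top transverse order $(4\pi t)^{-\dim\widetilde N/2}$ there, so collecting them yields $b_k(N,x)$. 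Lower-dimensional strata lying in $\overline{\widetilde N}\setminus\widetilde N$ (Theorem~\ref{stratification}(b)) get absorbed into their own, lower-dimensional, contributions rather than double counted. Pushing the chart integral down to $N\subset\orb$ and reconciling the averaging weight $1/\myabs{G_i}$ against the size of the $G_i$-orbit of $\widetilde N$ leaves the factor $1/\myabs{\iso(N)}$; verifying that the whole assembly is independent of the atlas and of the chosen lifts (using that isotropy type is chart-independent, Definition~\ref{defn:ofld}(c)) then gives the stated expansion. Finally, since each $I_N$ contributes powers $t^{k-\dim N/2}$, factoring out $(4\pi t)^{-\dim\orb/2}$ turns them into powers $t^{(\dim\orb-\dim N)/2+k}$, integral or half-integral according as the codimension of $N$ is even or odd, which is the asserted form $(4\pi t)^{-\dim\orb/2}\sum_j c_j t^{j/2}$. \emph{The main obstacle is precisely this combinatorial bookkeeping} — the orbit counting, the lift- and chart-independence, and the partition-of-unity patching — rather than any single analytic estimate, each ingredient of which is local and by now standard.
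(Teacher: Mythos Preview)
The paper does not prove Theorem~\ref{hta} at all; it is quoted verbatim from \cite[Theorem~4.8]{dggw} and used as a black box, so there is no in-paper argument to compare your proposal against. That said, your outline (finite atlas with partition of unity, method-of-images formula for the orbifold heat kernel, Minakshisundaram--Pleijel parametrix for the identity term, Fermi-coordinate Gaussian integration near each $\fix(\gamma)$ for $\gamma\neq\Id$, and the stratum-by-stratum reorganization producing the weights $1/\myabs{\iso(N)}$) is an accurate sketch of the proof actually given in \cite{dggw}, building on Donnelly \cite{MR0420743}.
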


\section{Main result}

We begin by defining a locally orientable orbifold, as discussed in the introduction.  Then, following a series of lemmas, we use orbifold heat invariants to show that an orbifold that is locally orientable cannot be isospectral to one that is not. 

\begin{definition} Let $\orb$ be an orbifold.
\begin{itemize}
\item[a.] An orbifold coordinate chart $\cc$ in $\orb$ is said to be \emph{orientable} if the group $G_U$ consists of orientation-preserving transformations of $\wtu$.
\item[b.] If all coordinate charts of $\mathcal{O}$ are orientable then we say that $\orb$ is \emph{locally orientable}.
\end{itemize}    
\end{definition}

\begin{lemma}\label{lem:b_0} Let $\orb$ be a Riemannian orbifold. Let $N$ be an $\orb$-stratum and $x\in N$. For a coordinate chart $(\widetilde{U}, G_U, \pi_U)$ about $x$ let $\widetilde{N}$ be the $\wtu$-stratum of a point $\tilde{x}\in \varphi_U^{-1}(x)$.  If $\iso^{\max}(\widetilde N)$ is non-empty then $b_0(N,x) > 0$.
\end{lemma}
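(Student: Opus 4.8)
The plan is to unwind Definition~\ref{def:formulas}(c) and reduce the lemma to the positivity of each individual twisted coefficient. Since $b_0(N,x) = \sum_{\gamma \in \iso^{\max}(\widetilde{N})} b_0(\gamma, \tilde x)$ and the index set is non-empty by hypothesis (and finite, being a subset of $G_U$), it suffices to prove that $b_0(\gamma, \tilde x) > 0$ for every $\gamma \in \iso^{\max}(\widetilde{N})$; the conclusion then follows at once, a sum of finitely many positive numbers over a non-empty index set being positive.

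To establish $b_0(\gamma, \tilde x) > 0$, I would recall the closed form of the leading term from \cite[Section 4.2]{dggw}, going back to Donnelly \cite{MR0420743}: $b_0(\gamma, \tilde x) = |\det(\Id - A_\gamma(\tilde x))|^{-1}$, where $A_\gamma(\tilde x)$ is $d\gamma_{\tilde x}$ restricted to the normal space $\nu_{\tilde x}$ of $\fix(\gamma)$ in $\wtu$ at $\tilde x$. With this in hand, positivity of $b_0(\gamma, \tilde x)$ is equivalent to $\Id - A_\gamma(\tilde x)$ being invertible, i.e. to $d\gamma_{\tilde x}$ having no nonzero fixed vector in $\nu_{\tilde x}$. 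That is the standard fact that the fixed-point set of an isometry fixing $\tilde x$ is totally geodesic with tangent space exactly $\ker(d\gamma_{\tilde x} - \Id)$: because the metric on $\wtu$ is $G_U$-invariant, $\gamma$ acts as an isometry, so $\gamma \circ \exp_{\tilde x} = \exp_{\tilde x} \circ\, d\gamma_{\tilde x}$, and any $v$ with $d\gamma_{\tilde x} v = v$ gives a pointwise-fixed geodesic through $\tilde x$; hence $\ker(d\gamma_{\tilde x}-\Id) = T_{\tilde x}\fix(\gamma)$, and its orthogonal complement $\nu_{\tilde x}$ contains no fixed vectors. Therefore $\det(\Id - A_\gamma(\tilde x)) \neq 0$ and $b_0(\gamma, \tilde x) > 0$.

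The one delicate point — more bookkeeping than real difficulty — is to check that the determinant in this formula is taken over the right space. The hypothesis $\gamma \in \iso^{\max}(\widetilde{N})$ says precisely that $\widetilde{N}$ is open in $\fix(\gamma)$, so $T_{\tilde x}\widetilde{N} = T_{\tilde x}\fix(\gamma)$ and the normal space of $\widetilde{N}$ agrees with $\nu_{\tilde x}$; it is on this space, not on the smaller normal space of some larger fixed set, that $d\gamma_{\tilde x}$ has no fixed vectors. (Indeed, for $\gamma \in \iso(\widetilde{N}) \setminus \iso^{\max}(\widetilde{N})$ the set $\fix(\gamma)$ strictly contains a neighborhood of $\tilde x$ in $\widetilde{N}$, $d\gamma_{\tilde x}$ then fixes a vector normal to $\widetilde{N}$, and $\Id - A_\gamma$ is singular — which is exactly why the sum defining $b_0(N,x)$ is restricted to $\iso^{\max}(\widetilde{N})$.) Once this compatibility with the conventions of \cite{dggw} is confirmed, the proof is complete.
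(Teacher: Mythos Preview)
Your proof is correct and follows essentially the same route as the paper: both reduce $b_0(N,x)>0$ to the positivity of each summand via the formula $b_0(\gamma,\tilde x)=|\det B_\gamma(\tilde x)|$ (equivalently $|\det(\Id-A_\gamma(\tilde x))|^{-1}$) and the non-singularity of that matrix. The only difference is that the paper simply cites \cite{dggw} for the non-singularity of $B_\gamma$, whereas you supply the underlying geometric argument that $d\gamma_{\tilde x}$ has no fixed vectors on the normal space to $\fix(\gamma)$; your version is thus more self-contained but not a different approach.
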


\begin{proof} From \cite[p.16]{dggw} we have $$b_0(N,x) = \sum_{\gamma \in \iso^{\max}(\widetilde N)} \myabs{\det(B_\gamma(\tilde{x})) }$$
where $B_\gamma(\tilde{x})$ is a non-singular matrix.  Because $\iso^{\max}(\widetilde N)$ is non-empty, we see that $b_0(N,x)$ is the sum of a list of positive numbers.
\end{proof}

\begin{lemma}\label{lem:dim_of_fix} Let $\orb$ be a Riemannian orbifold. Suppose $(\widetilde{U}, G_U, \pi_U)$ is a coordinate chart in the orbifold $\orb$ and let $\widetilde{N}$ be a $\widetilde{U}$-stratum.  Let $\gamma \in \iso(\wtn)$. Then $\gamma \in \iso^{\max}(\wtn)$ if and only if $\dim(\fix(\gamma)) = \dim(\widetilde{N})$.
\end{lemma}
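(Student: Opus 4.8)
The plan is to reduce the statement to a local comparison at a single point $\tilde x\in\wtn$ between $\wtn$ and the connected component $W$ of $\fix(\gamma)$ containing $\tilde x$. First, since $\gamma\in\iso(\wtn)$, the element $\gamma$ fixes every point of $\wtn$, so $\wtn\subseteq\fix(\gamma)$; because $\wtn$ is connected it lies in a single component $W$ of $\fix(\gamma)$. Applying Theorem~\ref{stratification}(g) with $H=\langle\gamma\rangle\le G_U$, the set $W$ is a closed submanifold of $\wtu$ and the $\wtu$-stratification restricts to a stratification of $W$; in particular $\wtn$ is a (locally closed, hence embedded) submanifold of $W$, with $\dim\wtn\le\dim W$. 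I will read $\dim\fix(\gamma)$ as $\dim W$, the local dimension of $\fix(\gamma)$ at $\tilde x$; note that when the chart is chosen so that $G_U$ acts linearly, $\fix(\gamma)$ is a linear subspace and there is no ambiguity.

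The core of the proof is the chain of equivalences
\begin{align*}
\gamma\in\iso^{\max}(\wtn)
&\iff \wtn\text{ is open in }\fix(\gamma)\\
&\iff \wtn\text{ is open in }W\\
&\iff \dim\wtn = \dim W.
\end{align*}
The first line is just the definition of $\iso^{\max}(\wtn)$. For the second, I would use that the metric on $\wtu$ is $G_U$-invariant (Definition~\ref{defn:ofld}(e)), so $\gamma$ acts as an isometry; hence near any fixed point $p$ the set $\fix(\gamma)$ coincides with the totally geodesic submanifold $\exp_p(\ker(d\gamma_p-\Id))$, which makes $\fix(\gamma)$ locally connected, so its component $W$ is relatively open in $\fix(\gamma)$ and openness in $\fix(\gamma)$ is the same as openness in $W$. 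The third line is the elementary fact that an embedded submanifold of a connected manifold $W$ is open in $W$ exactly when it has dimension $\dim W$ (invariance of domain in one direction, immediate in the other). That it suffices to test openness at the single point $\tilde x$ follows because $\dim\wtn$ and $\dim W$ are constant along the connected sets $\wtn$ and $W$.

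The step I expect to require the most care is making precise the meaning of $\dim\fix(\gamma)$ and verifying that $W$ is relatively open in $\fix(\gamma)$: a single diffeomorphism can have a fixed set of non-constant dimension, so the claim is really about the component of $\fix(\gamma)$ through $\tilde x$, and the cleanest way to control its local structure is to use that $\gamma$ is an isometry (or, alternatively, to pass to a linearized chart around $\tilde x$, where $\fix(\gamma)$ is simply a linear subspace). Everything else — Theorem~\ref{stratification}(g) for the submanifold structure of $W$ and the restricted stratification, together with invariance of domain for the dimension count — is routine.
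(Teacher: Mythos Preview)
Your argument is correct and follows essentially the same route as the paper: both directions hinge on Theorem~\ref{stratification}(g) applied to $H=\langle\gamma\rangle$, together with the fact that a stratum of full dimension in the component $W$ of $\fix(\gamma)$ is open there (the paper cites \cite[Remark 2.9(i)]{dggw} for this, whereas you invoke invariance of domain directly). Your extra care in reading $\dim\fix(\gamma)$ as the dimension of the component through $\tilde x$, and in using that $\gamma$ is an isometry to ensure $\fix(\gamma)$ is locally a submanifold, makes explicit a point the paper leaves implicit.
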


\begin{proof}
If $\gamma \in \iso^{\max}(\wtn)$ then $\wtn$ is open in the submanifold $\fix(\gamma)$, implying $\dim(\widetilde{N}) = \dim(\fix(\gamma))$. For the reverse direction recall that Theorem~\ref{stratification}(g) states that each connected component of $\fix(\gamma)$ (more precisely the fixed point set of the cyclic group generated by $\gamma$, which equals $\fix(\gamma)$)  is stratified by a set of $\wtu$-strata, one of which is $\widetilde{N}$.  By \cite[Remark 2.9(i)]{dggw} maximum dimensional strata are open.  We see $\widetilde{N}$ is open in $\fix(\gamma)$, thus $\gamma \in \iso^{\max}(\wtn)$.
\end{proof}

 The paper by H. Donnelly \cite{MR0433513} was helpful in the development of the following lemma.

\begin{lemma}\label{lem:dim-of-ori-rev} Let $\orb$ be a Riemannian orbifold and $\cc$ a coordinate chart in $\orb$. Suppose $\gamma \in G_U$ and $\dim(\fix(\gamma))<\dim(\orb)$. Then, $\dim(\fix(\gamma))$ is of opposite parity to $\dim(\mathcal{O})$ if and only if $\gamma$ is orientation reversing.   
\end{lemma}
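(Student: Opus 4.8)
The plan is to reduce the statement to linear algebra at a fixed point of $\gamma$, exploiting that $G_U$ acts on $\wtu$ by isometries. Write $n=\dim\orb$ and pick a point $\tilde x\in\fix(\gamma)$ (in the situation of interest $\fix(\gamma)$ is nonempty, since this lemma is applied when $\gamma$ lies in the isotropy group of some stratum). Because $\gamma$ is an isometry fixing $\tilde x$, its differential $A:=d\gamma_{\tilde x}$ is an orthogonal endomorphism of $T_{\tilde x}\wtu\cong\R^n$, and it has finite order since $\gamma$ does. Moreover $\gamma$ commutes with the Riemannian exponential map $\exp_{\tilde x}$, so $\exp_{\tilde x}$ identifies a neighborhood of $\tilde x$ in $\fix(\gamma)$ with a neighborhood of $0$ in $\ker(A-\Id)$; hence the component of $\fix(\gamma)$ through $\tilde x$ has dimension $f:=\dim\ker(A-\Id)$, the multiplicity of the eigenvalue $1$ of $A$.

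Next I would invoke the normal form of a finite-order orthogonal transformation: over $\C$, $A$ is diagonalizable with eigenvalues on the unit circle, consisting of the eigenvalue $1$ with multiplicity $f$, the eigenvalue $-1$ with some multiplicity $m\ge 0$, and conjugate pairs $e^{\pm i\theta}$, $\theta\in(0,\pi)$, contributing an even total of $2c$ eigenvalues. Thus $n=f+m+2c$, so $n-f\equiv m\pmod 2$, while $\det A=(-1)^m$ because each conjugate pair contributes a factor of $1$. On the other hand, since $\wtu$ is connected and $\gamma$ a diffeomorphism, the sign of the Jacobian determinant of $\gamma$ is constant on $\wtu$, and at the fixed point $\tilde x$ it equals $\det A=\pm1$. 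Therefore $\gamma$ is orientation reversing if and only if $\det A=-1$, if and only if $m$ is odd, if and only if $n-f$ is odd, which says precisely that $\dim\fix(\gamma)$ and $\dim\orb$ have opposite parity.

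The only subtlety I anticipate is that $\fix(\gamma)$ need not be connected, so I should confirm that $\dim\fix(\gamma)$ is unambiguous modulo $2$. But the computation above applies at every fixed point $\tilde y$: the local dimension $f(\tilde y)$ satisfies $(-1)^{\,n-f(\tilde y)}=\det d\gamma_{\tilde y}=\det d\gamma$, which is independent of $\tilde y$, so all components of $\fix(\gamma)$ have dimensions of a single parity and $\dim\fix(\gamma)\bmod 2$ is well defined. (In the intended application $\gamma\in\iso^{\max}(\wtn)$, so by Lemma~\ref{lem:dim_of_fix} one has $\dim\fix(\gamma)=\dim\wtn$ with $\wtn$ connected, and the issue does not even arise.) Finally I would note that the hypothesis $\dim\fix(\gamma)<\dim\orb$ only serves to exclude $\gamma=\Id$: an isometry of the connected manifold $\wtu$ fixing an open set is the identity, so $f=n$ forces $\gamma=\Id$, which is orientation preserving and has $\dim\fix(\gamma)=\dim\orb$ of the same parity — consistent with the claimed equivalence.
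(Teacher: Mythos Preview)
Your argument is correct and is essentially the same as the paper's: both linearize at a fixed point and use the orthogonal normal form of $d\gamma$ to see that the parity of $n-\dim\fix(\gamma)$ is governed by the multiplicity of the eigenvalue $-1$, hence by the sign of $\det d\gamma$. You add some extra care (the exponential-map identification of $\fix(\gamma)$ with $\ker(A-\Id)$, the parity consistency across components, and the role of the hypothesis $\dim\fix(\gamma)<n$), but the core idea is identical.
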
  

\begin{proof}  For simplicity write $\dim(\orb)=n$ and $\dim(\fix(\gamma))=d$.  Let $W$ be a connected component of $\fix(\gamma)$. Because $\wtu$ is connected it suffices to show that at some point $p\in W$ the differential of $\gamma$, denoted $\gamma_{*p}$, is orientation reversing exactly when $n$ and $d$ have opposite parity.  For any $p\in W$ we have that $\gamma_{*p}$ acts trivially on $T_pW$ and that
\[(T_pW)^\perp =  (T_pW)^{\perp}(-1) \oplus  (T_pW)^{\perp}(\theta_1) \oplus \dots \oplus  (T_pW)^{\perp}(\theta_\ell)\]
where each $\theta_i \in (0,\pi)$,  $\gamma_{*p}$ acts on $(T_pW)^{\perp}(-1)$ by multiplication by $-1$, and each $(T_pW)^{\perp}(\theta_i)$ has even dimension and is acted upon by $\gamma_{*p}$ by a direct sum of rotations by the angle $\theta_i$.  Now $n$ and $d$ have opposite parity exactly when $\dim((T_pW)^\perp)=n-d$ is odd. This can only occur if $(T_pW)^{\perp}(-1)$ is odd dimensional, in particular when $\gamma$ is orientation reversing.
\end{proof}

\begin{definition} Let $\orb$ be a Riemannian orbifold. If the dimension of an $\orb$-stratum $N$ has opposite parity to the dimension of $\orb$, we call $N$ an \emph{opposite parity stratum} of $\orb$. For convenience the phrase ``opposite parity stratum" will be abbreviated to ``OP-stratum."
\end{definition}

\begin{lemma}\label{orinop} Let $\orb$ be a Riemannian orbifold. Then $\orb$ is locally orientable if and only if $\orb$ has no primary OP-strata.
\end{lemma}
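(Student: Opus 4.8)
The plan is to read this equivalence off from Lemmas~\ref{lem:dim_of_fix} and~\ref{lem:dim-of-ori-rev}: the first identifies ``$\gamma\in\iso^{\max}(\wtn)$'' with ``$\dim(\fix(\gamma))=\dim(\wtn)$,'' and the second identifies ``$\dim(\fix(\gamma))$ has parity opposite to $\dim(\orb)$'' with ``$\gamma$ is orientation reversing,'' so chaining them converts the primary-OP-stratum condition into the existence of an orientation-reversing element in some local group $G_U$. Accordingly I would prove the equivalent statement obtained by negating both sides: $\orb$ is \emph{not} locally orientable if and only if $\orb$ has a primary OP-stratum. Throughout I use that ``$N$ is a primary $\orb$-stratum'' (meaning that one, equivalently every, lift $\wtn$ of $N$ has $\iso^{\max}(\wtn)\neq\emptyset$) and ``$N$ is an OP-stratum'' are intrinsic to $N$, independent of the chart and lift used to check them, which follows from the chart-independence of the isotropy type of a point and of the local stratifications of Theorem~\ref{stratification}.

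\emph{Primary OP-stratum $\Rightarrow$ not locally orientable.} Let $N$ be a primary OP-stratum, fix a chart $\cc$ and a lift $\wtn$ with $\iso^{\max}(\wtn)\neq\emptyset$, and choose $\gamma\in\iso^{\max}(\wtn)$. By Lemma~\ref{lem:dim_of_fix}, $\dim(\fix(\gamma))=\dim(\wtn)=\dim(N)$. Since $N$ is an OP-stratum, $\dim(N)$ has parity opposite to $\dim(\orb)$, so $\dim(N)<\dim(\orb)$; hence $\dim(\fix(\gamma))<\dim(\orb)$ and has parity opposite to $\dim(\orb)$. Lemma~\ref{lem:dim-of-ori-rev} then forces $\gamma$ to be orientation reversing, so $\cc$ is not orientable and $\orb$ is not locally orientable.

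\emph{Not locally orientable $\Rightarrow$ primary OP-stratum.} Suppose $\orb$ is not locally orientable; then some point has an orientation-reversing element in its isotropy group, and working in a standard linear chart $\cc$ about such a point we may take $\wtu$ to be a ball in $\R^n$, $G_U\le O(n)$ acting linearly, and $\gamma\in G_U$ orientation reversing fixing the center, so that $\fix(\gamma)$ is a nonempty linear subspace. Since $\gamma\neq\Id$ and $G_U$ acts effectively on the connected set $\wtu$, $\fix(\gamma)$ is a proper closed submanifold, so $d:=\dim(\fix(\gamma))<\dim(\orb)$, and Lemma~\ref{lem:dim-of-ori-rev} gives that $d$ has parity opposite to $\dim(\orb)$. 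I then pass to the stratification: by Theorem~\ref{stratification}(g) a connected component $W$ of $\fix(\gamma)$ is stratified by $\wtu$-strata, and its top-dimensional stratum $\wtn$ is open in $W$ by \cite[Remark 2.9(i)]{dggw}, hence $\dim(\wtn)=d$. Every point of $\wtn\subseteq\fix(\gamma)$ is fixed by $\gamma$, so $\gamma\in\iso(\wtn)$, and since $\dim(\fix(\gamma))=\dim(\wtn)$ Lemma~\ref{lem:dim_of_fix} places $\gamma$ in $\iso^{\max}(\wtn)$, so $\wtn$ is a primary $\wtu$-stratum. Finally, by Theorem~\ref{stratification}(e) the image $\pi_U(\wtn)$ lies in an $\orb$-stratum $N$ with $\dim(N)=d$, and $N$ is therefore a primary OP-stratum.

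Once Lemmas~\ref{lem:dim_of_fix} and~\ref{lem:dim-of-ori-rev} are available the argument is largely bookkeeping; the step I expect to require the most care is the opening of the converse --- extracting from the failure of local orientability a chart in which an orientation-reversing element genuinely fixes a point (so that $\fix(\gamma)\neq\emptyset$, which Lemma~\ref{lem:dim-of-ori-rev} implicitly needs), for which one reduces to a standard linear chart as in \cite[Section 1.2]{gordon12} --- together with the routine check that ``primary'' passes correctly between $\wtu$ and $\orb$.
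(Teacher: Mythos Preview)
Your proof is correct and follows essentially the same route as the paper's: both directions are argued in the contrapositive by chaining Lemmas~\ref{lem:dim_of_fix} and~\ref{lem:dim-of-ori-rev}, with Theorem~\ref{stratification}(g) used to locate a top-dimensional $\wtu$-stratum inside a component of $\fix(\gamma)$. You are slightly more careful than the paper in passing to a linear chart to guarantee $\fix(\gamma)\neq\emptyset$ and in explicitly verifying the hypothesis $\dim(\fix(\gamma))<\dim(\orb)$ of Lemma~\ref{lem:dim-of-ori-rev}, but otherwise the arguments coincide.
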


\begin{proof}
Suppose $\orb$ is not locally orientable. Then there is a coordinate chart $\cc$ in $\orb$ with an orientation reversing element $\gamma \in G_U$. Lemma~\ref{lem:dim-of-ori-rev} implies $\fix(\gamma)$ has dimension of opposite parity to the dimension of $\orb$. Suppose $W$ is a connected component of  $\fix(\gamma)$. Theorem~\ref{stratification}(g) implies $W$ is stratified by a finite set of $\wtu$-strata $N_1, N_2, \dots, N_r$. So for at least one $i_0 \in \{1, 2, \dots, r\}$, the stratum $N_{i_0}$ must have the same dimension as $\fix(\gamma)$. Lemma~\ref{lem:dim_of_fix} implies $\gamma \in \iso^{\max}(N_{i_0})$. Thus $N_{i_0}$ is the required primary OP stratum.

Suppose $\orb$ has a primary OP stratum $N$ and take $\gamma \in \iso^{\max}(N)$. Lemma~\ref{lem:dim_of_fix} implies $\dim(\fix(\gamma))= \dim(N)$. Thus $\fix(\gamma)$ has dimension of opposite parity to the dimension of $\orb$. By Lemma~\ref{lem:dim-of-ori-rev} we conclude $\gamma$ is orientation reversing.
\end{proof}

\begin{theorem}  A locally orientable orbifold cannot be isospectral to an orbifold that is not locally orientable.
\end{theorem}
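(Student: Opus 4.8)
The plan is to argue by contradiction using the asymptotic expansion of the heat trace in Theorem~\ref{hta}. Suppose $\orb$ is locally orientable, $\orb'$ is not, and $\orb$ and $\orb'$ are isospectral. I would first record the two consequences of isospectrality that the argument needs: by Farsi's Weyl law \cite{MR1821378} the orbifolds $\orb$ and $\orb'$ have the same dimension $n$, and since the heat trace is determined by the spectrum, the coefficients $c_j$ of the common expansion $(4\pi t)^{-n/2}\sum_{j\ge 0} c_j t^{j/2}$ agree, i.e. $c_j(\orb)=c_j(\orb')$ for all $j$.

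The heart of the proof is a bookkeeping observation about which strata feed which coefficient. Writing $I_N=(4\pi t)^{-\dim(N)/2}\sum_k t^k\int_N b_k(N,x)\,d\vol_N(x)$ and factoring out the normalization $(4\pi t)^{-n/2}$, a singular $\orb$-stratum $N$ of dimension $d$ contributes to $c_j$ only for $j=(n-d)+2k$ with $k\ge 0$; in particular it can contribute to an odd-index $c_j$ only if $n-d$ is odd, that is, only if $N$ is an OP-stratum. Moreover, because $b_k(N,x)=\sum_{\gamma\in\iso^{\max}(\widetilde N)}b_k(\gamma,\tilde x)$, a non-primary stratum contributes $I_N=0$, and $I_0$ affects only even-index coefficients. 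Combining this with Lemma~\ref{orinop}, a locally orientable orbifold — having no primary OP-strata — satisfies $c_j=0$ for every odd $j$; in particular $c_j(\orb)=0$ for all odd $j$.

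It then suffices to produce an odd index at which $\orb'$ has a strictly positive coefficient. Since $\orb'$ is not locally orientable, Lemma~\ref{orinop} gives it a primary OP-stratum; let $d_{\max}$ be the largest dimension of a primary OP-stratum of $\orb'$, so $1\le n-d_{\max}$ and $n-d_{\max}$ is odd. I would then identify $c_{n-d_{\max}}(\orb')$ exactly: by the bookkeeping above, a stratum $N$ contributing to index $n-d_{\max}$ must be primary, must be an OP-stratum, and must have $\dim(N)=d_{\max}+2k\ge d_{\max}$, which by maximality of $d_{\max}$ forces $\dim(N)=d_{\max}$ and $k=0$. Hence $c_{n-d_{\max}}(\orb')$ is a positive multiple of $\sum_N \frac{1}{\myabs{\iso(N)}}\int_N b_0(N,x)\,d\vol_N(x)$, the sum running over the maximal-dimensional primary OP-strata of $\orb'$, and Lemma~\ref{lem:b_0} shows every integrand is strictly positive. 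Therefore $c_{n-d_{\max}}(\orb')>0=c_{n-d_{\max}}(\orb)$, contradicting $c_j(\orb)=c_j(\orb')$.

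The step I expect to demand the most care is isolating the right odd coefficient. Because the invariants $b_k(\gamma,\tilde x)$ for $k\ge 1$ are not known to be sign-definite, one cannot simply inspect a fixed odd coefficient such as $c_1$: lower-dimensional OP-strata contributing through higher $b_k$ terms could in principle cancel a top contribution. Passing to the \emph{maximal}-dimensional primary OP-stratum is exactly what restricts the relevant contributions to $b_0$ terms alone, where positivity (Lemma~\ref{lem:b_0}) is available; the delicate part of the write-up is verifying rigorously that no other stratum — of any dimension, primary or not, OP or not — and not $I_0$ either, can reach the coefficient $c_{n-d_{\max}}$.
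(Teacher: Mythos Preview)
Your argument is correct and follows essentially the same route as the paper: both use Weyl's law to equate dimensions, invoke Lemma~\ref{orinop} to translate local orientability into the absence of primary OP-strata, pick out the primary OP-stratum of \emph{maximal} dimension on the non-locally-orientable side, and then use Lemma~\ref{lem:b_0} to get strict positivity of the corresponding lowest-order coefficient. The only presentational difference is that the paper splits into cases according to the parity of $n$ and speaks of ``integer'' versus ``half-integer'' powers of $t$, whereas you normalize by $(4\pi t)^{-n/2}$ and track the parity of the index $j$ in $c_j$; your formulation handles both parities at once, but the content is the same.
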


\begin{proof} Consider a locally orientable orbifold  $\orb_{ori}$ and a non-locally orientable orbifold $\orb_{non}$.  If these orbifolds have different dimensions then the Weyl law for orbifolds \cite{MR1821378} implies they cannot be isosopectral.  
So we can assume $\dim(\orb_{ori})= \dim(\orb_{non}):=n$.  In the case that $n$ is odd, Lemma~\ref{orinop} implies $\orb_{ori}$ will have no even dimensional primary strata while
$\orb_{non}$ will have at least one even dimensional primary stratum.  

We argue that in this situation all integer power terms in the heat expansion of $\orb_{ori}$ vanish.  To see this first observe that because $n$ is odd $I_0$, the first term of the heat expansion as stated in Theorem \ref{hta}, consists of only half integer terms so any integer power terms would have to arise in the second term of this expansion.  We use Definition~\ref{def:formulas} to give the following detailed expression of the second term,
\begin{equation*}
    \sum_{N \in S(\mathcal{O})}\frac{{(4\pi
    t)}^{-\dim(N)/2}}{\myabs{\iso(N)}}\sum_{k=0}^{\infty}t^k\int_{N}
    \sum_{\gamma \in \iso^{\max}(\widetilde{N})}b_k(\gamma,x) dvol_N \ .
\end{equation*}
Notice that no integer terms can arise in this second term as $\orb_{ori}$ lacks even dimensional primary strata.

We now show that at least one integer power coefficient in the expansion of
$\orb_{non}$ is nonzero. Let $d$ denote the maximum dimension of all the strata in the set of even dimensional primary strata in $\orb_{non}$. Note that only these strata of maximal
dimension will contribute to the $-d/2$ term in the heat expansion, which
occurs in the $k=0$ iteration in the sum. Furthermore, by Lemma~\ref{lem:b_0} the
$b_0$ term for each contributing strata is strictly positive. Thus the
integer $-d/2$ term is the sum of strictly positive terms and so must be nonzero. 

Since $\orb_{orb}$ and $\orb_{non}$ differ in at least one term in the heat
expansion they cannot be isosopectral.  When $n$ is even the proof proceeds similarly, reversing the roles of integer and half-integer terms.

\end{proof}

We end with a corollary that is equivalent to \cite[Theorem 5.1]{dggw}.

\begin{cor} Let $\orb$ be an orbifold.  If any local chart $\cc$ on $\orb$ possesses an orientation reversing local group element $\gamma \in G_U$, then $\orb$ cannot be isospectral to a manifold.
\end{cor}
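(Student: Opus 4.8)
The plan is to obtain this as an immediate consequence of the preceding theorem, once we observe that a Riemannian manifold, regarded as a Riemannian orbifold, is locally orientable. First I would recall that every Riemannian manifold $M$ is a Riemannian orbifold in the sense of Definition~\ref{defn:ofld}: one may take an atlas of charts $(\wtu, G_U, \pi_U)$ in which each $G_U$ is the trivial group and $\pi_U$ is an ordinary Riemannian coordinate chart. For such a chart the only element of $G_U$ is the identity, which is orientation preserving, so the chart is orientable. Hence $M$ is locally orientable.

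Next I would invoke the hypothesis: some chart $\cc$ on $\orb$ contains an orientation reversing element $\gamma \in G_U$, so by definition $\orb$ is \emph{not} locally orientable. Applying the theorem with $\orb_{ori} = M$ and $\orb_{non} = \orb$ then shows that $\orb$ cannot be isospectral to $M$. Since $M$ was an arbitrary Riemannian manifold, $\orb$ cannot be isospectral to any manifold, which is the assertion.

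I do not expect a real obstacle here; the only point requiring a word of care is the reduction "manifold $\Rightarrow$ locally orientable orbifold," i.e.\ checking that the trivial-group charts really do meet the orbifold chart axioms and that the local orientability condition is vacuously satisfied. Both are routine: the trivial group acts effectively (indeed freely) by diffeomorphisms, $\wtu/G_U = \wtu$, and the identity map is orientation preserving, so there is nothing to verify beyond unwinding the definitions. Thus the corollary follows directly from the theorem and Lemma~\ref{orinop}.
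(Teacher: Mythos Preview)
Your proposal is correct and matches the paper's intended derivation: the corollary is stated without proof, but it is clearly meant to follow immediately from the main theorem by observing that a manifold, viewed as an orbifold with trivial local groups, is locally orientable, while the hypothesis on $\orb$ makes it not locally orientable. One small redundancy: you do not need Lemma~\ref{orinop} at the end, since the fact that $\orb$ is not locally orientable already follows directly from the definition once a chart with an orientation reversing $\gamma$ is given.
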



\bibliographystyle{plain}
\bibliography{localoribib}

\def\cprime{$'$}
\begin{thebibliography}{10}

\bibitem{MR1322332}
Pierre B\'{e}rard and David Webb.
\newblock On ne peut pas entendre l'orientabilit\'{e} d'une surface.
\newblock {\em C. R. Acad. Sci. Paris S\'{e}r. I Math.}, 320(5):533--536, 1995.

\bibitem{MR1089240}
Yuan-Jen Chiang.
\newblock Harmonic maps of {$V$}-manifolds.
\newblock {\em Ann. Global Anal. Geom.}, 8(3):315--344, 1990.

\bibitem{MR0420743}
Harold Donnelly.
\newblock Spectrum and the fixed point sets of isometries. {I}.
\newblock {\em Math. Ann.}, 224(2):161--170, 1976.

\bibitem{MR0433513}
Harold Donnelly and V.~K. Patodi.
\newblock Spectrum and the fixed point sets of isometries. {II}.
\newblock {\em Topology}, 16(1):1--11, 1977.

\bibitem{dggw}
E.~B. Dryden, C.~S. Gordon, S.~J. Greenwald, and D.~L. Webb.
\newblock Asymptotic expansion of the heat kernel for orbifolds.
\newblock {\em Michigan Math. J.}, 56(1):205--238, 2008.

\bibitem{MR3619744}
Emily~B. Dryden, Carolyn~S. Gordon, Sarah~J. Greenwald, and David~L. Webb.
\newblock Erratum to ``{A}symptotic expansion of the heat kernel for
  orbifolds'' [ {MR}2433665].
\newblock {\em Michigan Math. J.}, 66(1):221--222, 2017.

\bibitem{MR1821378}
Carla Farsi.
\newblock Orbifold spectral theory.
\newblock {\em Rocky Mountain J. Math.}, 31(1):215--235, 2001.

\bibitem{gordon12}
C.~S. Gordon.
\newblock Orbifolds and their spectra.
\newblock In {\em Spectral geometry}, volume~84 of {\em Proc. Sympos. Pure
  Math.}, pages 49--71. Amer. Math. Soc., Providence, RI, 2012.

\bibitem{MR2447904}
Juan~Pablo Rossetti, Dorothee Schueth, and Martin Weilandt.
\newblock Isospectral orbifolds with different maximal isotropy orders.
\newblock {\em Ann. Global Anal. Geom.}, 34(4):351--366, 2008.

\bibitem{Satake56}
I.~Satake.
\newblock On a generalization of the notion of manifold.
\newblock {\em Proc. Nat. Acad. Sci. U.S.A.}, 42:359--363, 1956.

\bibitem{MR2263484}
Naveed Shams, Elizabeth Stanhope, and David~L. Webb.
\newblock One cannot hear orbifold isotropy type.
\newblock {\em Arch. Math. (Basel)}, 87(4):375--384, 2006.

\bibitem{Th}
W.~Thurston.
\newblock Geometry and topology of three-manifolds, 2002.
\newblock Lecture Notes, Electronic Version 1.1,
  http://library.msri.org/books/gt3m/.

\end{thebibliography}

\end{document}